\title{\vspace{-0.6cm} On tight cycles in hypergraphs}
\newtheorem{theorem}{Theorem}[section]
\newtheorem{proposition}[theorem]{Proposition}
\newtheorem{lemma}[theorem]{Lemma}
\newtheorem{question}[theorem]{Question}
\newtheorem{conjecture}[theorem]{Conjecture}
\date{}
\author{
Hao Huang \thanks{
Department of Math and CS, Emory University, Atlanta, GA 30322, USA.
Email: hao.huang@emory.edu. Research supported in part by the Collaboration Grants from the Simons Foundation.
}
\and
Jie Ma\thanks{
School of Mathematical Sciences,
University of Science and Technology of China, Hefei, Anhui 230026, China.
Email: jiema@ustc.edu.cn. Research supported in part by National Natural Science
Foundation of China (NSFC) grants 11501539 and 11622110.
}
}
\begin{document}
\maketitle
\abstract
A tight $k$-uniform $\ell$-cycle, denoted by $TC_\ell^k$, is a $k$-uniform hypergraph whose vertex set is $v_0, \cdots, v_{\ell-1}$, and the edges are all the $k$-tuples $\{v_i, v_{i+1}, \cdots, v_{i+k-1}\}$, with subscripts modulo $\ell$. Motivated by a classic result in graph theory that every $n$-vertex cycle-free graph has at most $n-1$ edges, S\'os and, independently, Verstra\"ete asked whether for every integer $k$, a $k$-uniform $n$-vertex hypergraph without any tight $k$-uniform cycles has at most $\binom{n-1}{k-1}$ edges. In this paper, we answer this question in negative. 


\section{Introduction} \label{section_introduction}

A classic result in graph theory says that if an $n$-vertex graph $G$ contains no cycle, then $G$ has at most $n-1$ edges. This upper bound is tight by the $n$-vertex trees. It is natural to ask whether such extremal result can be generalized to $k$-uniform hypergraphs. There are many different notions of cycles in $k$-uniform hypergraphs, most notably Berge cycles, loose cycles, and tight cycles (see \cite{verstraete_survey} for the definitions). They all coincide with the usual definitions of cycles when $k=2$. For the Tur\'an numbers on Berge cycles and loose cycles, there has been extensive research in the literature (see \cite{BG,CGJ,FJ,FO,G06,GL-3uniform,GL,JM,KMV}).

In this paper, we consider Tur\'an-type problems of the {\it tight $k$-uniform $\ell$-cycle}, denoted by $TC_\ell^k$  (for $\ell\ge k+1$), which is the $k$-uniform hypergraph whose vertex set is $v_0, \cdots, v_{\ell-1}$, and the edges are all the consecutive $k$-tuples $\{v_i, v_{i+1}, \cdots, v_{i+k-1}\}$, with subscripts modulo $\ell$.
It seems that the Tur\'an-type problems for tight cycles are more difficult, perhaps because of its interlocking structure. When the desired tight cycle has length linear in $n$, Allen et al. \cite{abcm} showed that for any $\delta >0$, there exists $n_0$ such that for any $0\le\alpha \le 1$, every $k$-uniform hypergraph on $n \ge n_0$ vertices with at least $(\alpha+\delta) {n \choose k}$ edges contains a tight cycle of length at least $\alpha n$. For $\alpha=1$ (i.e. Hamiltonian cycles), R\"odl, Ruci\'nski and Szemer\'edi \cite{rrs1, rrs2} established an approximate extension of Dirac's Theorem for $k$-uniform hypergraphs: for all $\gamma >0$, every sufficiently large $k$-uniform hypergraph such that every $(k-1)$-set of vertices lies in at least $(1/2+\gamma)n$ edges has a tight Hamilton
cycle. Both results uses the regularity lemma which is powerful when the extremal hypergraph is dense. Nevertheless, for shorter tight cycles, in particular cycles of constant lengths divisible by $k$, the extremal hypergraph is sparse and much less has been known.

Back to our initial question, we call a $k$-uniform hypergraph \textit{tight-cycle-free} if it does not contain any $TC_{\ell}^k$ for $\ell\ge k+1$. Throughout this paper we denote by $f_k(n)$ the maximum number of edges in a $k$-uniform $n$-vertex tight-cycle-free hypergraph. In this notation, $f_2(n)=n-1$. Indeed it is not hard to see that for every integer $k \ge 2$, $f_k(n) \ge \binom{n-1}{k-1}$. A $k$-uniform hypergraph is called a {\it full-$k$-star} if it consists of all the $k$-subsets of $[n]$ containing a given element called the {\it center}. When the uniformity is clear from the context, we will simply call it a full-star. One can easily verify that a full-$k$-star has $\binom{n-1}{k-1}$ edges and contains no tight cycles, since any tight cycle must have length at least $k+1$ and thus at least one of its edges cannot contain the center. S\'os and Verstraete (see \cite{verstraete_survey}) independently asked whether $f_k(n)$ always equals $\binom{n-1}{k-1}$. This conjecture is still wide open. In particular it is not even clear if $f_k(n)=\Theta(n^{k-1})$. One easy upper bound follows from an observation that $TC_{2k}^k$ is actually a sub-hypergraph of the complete $k$-partite $k$-uniform hypergraph $K^{k}_{2, \cdots, 2}$, whose Tur\'an number was shown to be $O(n^{k-(1/2)^{k-1}})$ by Erd\H os \cite{erdos_partite}. This gives $f_k(n) =O(n^{k-(1/2)^{k-1}})$. For $k=3$, currently the best known upper bound is $f_3(n)=O(n^{5/2})$ proved by Verstra\"ete \cite{verstraete_pc}, by showing that the Tur\'an number of $TC_{24}^3$ is $O(n^{5/2})$.

In this paper, we prove the following theorem, giving a negative answer to the S\'os-Verstra\"ete Conjecture.

\begin{theorem} \label{thm_main}
For every integer $k \ge 3$, there exits a constant $c=c(k)>0$ such that for sufficiently large $n$,
 $$f_k(n) \ge (1+c) \binom{n-1}{k-1}.$$
\end{theorem}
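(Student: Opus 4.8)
The plan is to construct, for each fixed $k\ge 3$, an explicit $n$-vertex $k$-uniform hypergraph $H$ that is tight-cycle-free and has at least $(1+c)\binom{n-1}{k-1}$ edges. The first point to record is an obstruction that explains why the problem is delicate: the full-$k$-star is a \emph{maximal} tight-cycle-free hypergraph. Indeed, if $e$ is any $k$-set avoiding the center $z$, then every $k$-subset of $e\cup\{z\}$ other than $e$ contains $z$ and hence already lies in the star, so together with $e$ they form a copy of $TC_{k+1}^k$ (the complete $k$-graph on $k+1$ vertices). Thus one cannot beat the conjectured bound by merely adjoining edges to a star; the construction must redistribute the role of the center globally.

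A natural first tool is the observation that if every edge of $H$ meets a fixed set $Z$ of $t$ vertices, then each $z\in Z$ lies in at most $k$ consecutive edges of any tight cycle, so every tight cycle has length at most $tk$. This reduces tight-cycle-freeness to avoiding a finite list of short configurations, which, after fixing the centers, translate into forbidden tight paths in the ``link'' $(k-1)$-graphs of the centers. I expect the main difficulty to surface exactly here: a $(k-1)$-graph with no tight path of $k$ edges is forced to be sparse. For $k=3$ this is the Erd\H os--Gallai bound, which allows only $O(n)$ edges in a graph with no $P_4$, so a bounded number of centers carrying dense links is incompatible with tight-cycle-freeness and cannot gain a constant factor. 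Hence the winning construction must a priori permit tight cycles of unbounded length and forbid them by a global, rather than local, mechanism.

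To supply such a mechanism I would attach to the vertices a weighting (or algebraic labelling) $w\colon[n]\to\R$ and include a $k$-set as an edge only when the sorted $w$-pattern of its elements obeys a prescribed rule, designed so that no closed tight walk can be consistent with $w$. A subtlety to respect is that any \emph{linear} invariant of the consecutive windows telescopes to zero around a cycle and therefore gives no obstruction at all, so the rule must be genuinely order-dependent and nonlinear. A softer alternative is the deletion method: take a carefully structured dense host with provably few tight cycles and delete one edge per cycle; here one must exceed the star's density of $\tfrac{1}{(k-1)!}n^{k-1}$ while keeping the total number of tight cycles $o(n^{k-1})$, which a uniformly random host cannot achieve, since at that density its expected number of long tight cycles already grows exponentially in $n$. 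In either realization the crux, and the step I expect to be hardest, is simultaneously guaranteeing density strictly above $\binom{n-1}{k-1}$ and controlling tight cycles of every length $\ell\ge k+1$; the interlocking, heavily overlapping structure of consecutive edges is precisely what makes this count, or this invariant, difficult to manage, and it constitutes the technical heart of the construction.
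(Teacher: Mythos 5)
Your proposal does not contain a proof: it assembles some correct preliminary observations and then names two candidate strategies (an algebraic vertex-weighting, and the deletion method) without executing either, explicitly deferring ``the technical heart of the construction.'' The one substantive observation you do prove --- that the full-$k$-star is a maximal tight-cycle-free hypergraph, since adjoining any edge $e$ avoiding the center $z$ creates the complete $k$-graph on $e\cup\{z\}$, which is a $TC_{k+1}^k$ --- is correct and is indeed the right starting point: it forces any improvement to delete star edges as well as add new ones. But from there your heuristics steer you away from the solution. You argue that the winning construction must work by a ``global, rather than local, mechanism,'' whereas the actual construction is a superposition of $\Theta(n^{k-1})$ \emph{bounded-size local} modifications of the star. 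Your intermediate claim that dense links at finitely many centers are incompatible with tight-cycle-freeness is beside the point, because the gadgets are not concentrated at a bounded set of vertices; and your dismissal of the deletion method only rules out a uniformly random host, not a structured one.

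The missing idea is the following two-step scheme. First, one exhibits a single gadget: a $k$-graph $F$ on $m=O(k^3)$ vertices disjoint from the center $0$, together with an accompanying set $G$ of star edges through $0$, such that $(H\setminus G)\cup F$ is tight-cycle-free and $|F|=|G|+1$. The extra edge is obtained by taking $e_0=[k]$ together with the $k$ sets $e_i=([k]\setminus\{i\})\cup\{k+i\}$ (plus $O(k^3)$ further sets to kill residual cycles through $0$), each new edge paying for one deleted star edge $\{0\}\cup f_i$, while $e_0$ rides for free because the deletions already destroy the complete $k$-graph on $\{0\}\cup[k]$ that your maximality observation identifies. Second, a gluing lemma: if gadgets $F_1,\dots,F_t$ have pairwise disjoint $(k-1)$-shadows, then performing all modifications simultaneously preserves tight-cycle-freeness, because consecutive edges of a tight cycle share $k-1$ vertices, so any tight cycle in the combined hypergraph has all of its non-center edges inside a single $F_i$ and hence lies in a single modified star. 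Packing $t=(1-o(1))\binom{n-1}{k-1}/\binom{m}{k-1}$ shadow-disjoint copies of $F$ via the R\"odl nibble then gains $t=\Omega(n^{k-1})$ edges and yields the constant $c=c(k)$. Without a concrete gadget and a gluing mechanism of this kind, your outline cannot be completed as written.
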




The rest of the paper is organized as follows. In the next section, we first present an explicit construction showing that $f_3(n)$ is strictly greater than $\binom{n-1}{2}$, but only better by a linear function in $n$. In Section \ref{section_proof}, we generalize this idea to construct denser hypergraphs without tight cycles and prove our main result Theorem \ref{thm_main}. 
The final section contains some concluding remarks and open problems.
Throughout this paper, we write $[n]$ for the set $\{1,2,\cdots, n\}$.

\section{Warm-up} \label{section_warmup}
As a warm-up, we construct a $3$-uniform $n$-vertex hypergraph $H$ with $\binom{n-1}{2} + \frac{n-1}{2}$ edges, so that $H$ does not contain any tight cycle.
The key idea is that one can slightly modify the full-$3$-star by removing some edges containing its center, and adding twice as many edges, while preserving the tight-cycle-freeness.

\begin{proposition} \label{prop_warmup}
For odd integers $n \ge 7$,
$$f_3(n) \ge \binom{n-1}{2} + \frac{n-1}{2}.$$
\end{proposition}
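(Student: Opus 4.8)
The plan is to modify the full-$3$-star centered at a vertex $0$ by deleting a small matching's worth of star edges and compensating with twice as many edges that avoid $0$. Write the non-center vertices as $[n-1]$; since $n$ is odd this set has even size, so I fix a perfect matching $A_1, \dots, A_t$ with $A_i = \{p_i, q_i\}$ and $t = \frac{n-1}{2}$. I would delete from the full-star the $t$ edges $\{0, p_i, q_i\}$, and then add, for each $i$ (with indices read modulo $t$), the two triples $\{p_i, q_i, p_{i+1}\}$ and $\{p_i, q_i, q_{i+1}\}$, using the vertices of the \emph{next} matching pair as the third point. This removes $\frac{n-1}{2}$ edges and adds $n-1$, so the resulting $H$ has exactly $\binom{n-1}{2} + \frac{n-1}{2}$ edges; the added edges avoid $0$ and are checked to be distinct from one another and from the surviving star edges once $t \ge 3$, i.e. $n \ge 7$.

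The heart of the argument is a structural dichotomy for a hypothetical tight cycle $C = TC_\ell^3$ in $H$. Since the vertices of $C$ are distinct, $0$ occupies at most one of them, so I split into two cases. If $0 \notin V(C)$, then $C$ lives entirely among the added edges $T$; if $0 = v_0 \in V(C)$, then exactly three edges of $C$ contain $0$, namely $\{0,v_1,v_2\}$, $\{0,v_{\ell-1},v_1\}$ and $\{0,v_{\ell-2},v_{\ell-1}\}$, while the remaining $\ell-3$ edges $\{v_i,v_{i+1},v_{i+2}\}$ for $1 \le i \le \ell-3$ form a tight path in $T$ on $v_1, \dots, v_{\ell-1}$. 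The key combinatorial lemma I would establish is that in $T$ every edge shares two vertices with exactly one other edge, namely its \emph{sibling} at the same base pair $A_i$, and shares at most one vertex with every other edge of $T$. Granting this, the only tightly adjacent pairs in $T$ are the $t$ sibling pairs, so the longest tight path in $T$ has just $2$ edges and $T$ contains no tight cycle at all. This immediately kills the case $0 \notin V(C)$ and forces $\ell \le 5$ in the case $0 \in V(C)$.

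It remains to rule out $\ell \in \{4,5\}$ through the center, and here the cyclic design of the construction pays off. For $\ell = 4$ the tight path is a single added edge $\{p_i, q_i, *\}$, and closing it into a $TC_4^3$ would require the star edge $\{0, p_i, q_i\}$, which is precisely one of the deleted edges. For $\ell = 5$ the tight path is a sibling pair, realized as $v_1 v_2 v_3 v_4 = p_{i+1}, p_i, q_i, q_{i+1}$, and closing it into a $TC_5^3$ requires in particular the star edge on its two endpoints, $\{0, v_4, v_1\} = \{0, p_{i+1}, q_{i+1}\} = \{0\} \cup A_{i+1}$, which is again deleted by our choice of third vertices. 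Thus no closing cycle survives in either case, completing the verification.

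The main obstacle is the structural lemma, and this is exactly where the hypothesis $t \ge 3$ (equivalently $n \ge 7$) enters: I would need to confirm that edges based at distinct pairs meet in at most one vertex, and the only potentially dangerous configurations, such as edges based at consecutive pairs $A_i, A_{i+1}$ or wrap-around coincidences, collapse into genuine two-vertex overlaps only when the cyclic structure is too short, as happens for $t = 2$. Once the lemma is secured the remaining case analysis is routine bookkeeping. A secondary point requiring care is the verification that all $2t$ added triples are genuinely distinct and disjoint from the surviving star, which again relies on $t \ge 3$ to prevent accidental coincidences among the third vertices.
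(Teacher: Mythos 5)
Your construction is exactly the paper's (your center $0$ and matching pairs $A_i=\{p_i,q_i\}$ correspond to the paper's center $1$ and pairs $\{2i,2i+1\}$, with the same cyclic wrap-around), and your verification rests on the same codegree facts the paper isolates, so this is essentially the same approach and it is correct. Your write-up merely organizes the check differently — a global dichotomy on whether the center lies on the cycle plus the sibling lemma, rather than the paper's local tracing outward from a fixed Type-II edge — which is a matter of presentation, not substance.
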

\begin{proof}
Let $H$ be the $3$-uniform hypergraph on the vertex set $[n]$ consisting of the following two types of edges. Type-I edges include all the triples containing $1$, except for those in the form $\{1, 2i, 2i+1\}$ for $i=1, \cdots, (n-1)/2$. Type-II edges are those in the form $\{2i, 2i+1, 2i+2\}$ or $\{2i, 2i+1, 2i+3\}$, for $i=1, \cdots \frac{n-3}{2}$, or the two triples $\{n-1, n, 2\}$ and $\{n-1, n, 3\}$. There are $\binom{n-1}{2}-(n-1)/2$ edges of Type I, and $n-1$ edges of Type II.
So $e(H)=\binom{n-1}{2}+(n-1)/2$. Notice that the only two edges in $H$ containing some $x\in \{2i,2i+1\}$ and $y\in \{2i+2,2i+3\}$ are $\{1,x,y\}$ and $\{2i,2i+1,y\}$.

Suppose $H$ contains a tight cycle. Such cycle must have length at least $4$. In particular, it contains at least one Type-II edge. Without loss of generality, we assume it is the triple $\{2,3,4\}$. Modulo symmetry, there are two possibilities of the relative order of these three vertices on the cycle: (i) they are in the order $2, 3, 4$; (ii) they are in the order $2, 4, 3$.

In the first case, since the only edges containing both $2$ and $3$ are the Type-II edges $\{2,3,4\}$ and $\{2,3,5\}$, and the only edges containing both $3$ and $4$ are $\{2,3,4\}$ and a Type-I edge $\{1, 3, 4\}$, it is not hard to see that the cycle must contain consecutive vertices $5,2,3,4,1$. Now the only edge (except $\{2,3,5\}$) containing $2$ and $5$ is $\{1,2,5\}$. This shows that the cycle consists of vertices $5,2,3,4,1$. However $\{1,4,5\} \not\in E(H)$, therefore Case (i) is impossible.

Similarly, for the second case, the cycle must consist of (consecutive) vertices $2, 4, 3, 1$ but $\{1,2,3\} \not \in E(H)$. Therefore both cases are not possible, and $H$ does not contain a tight cycle. This shows that $f_3(n) \ge \binom{n-1}{2} + \frac{n-1}{2}$.
\end{proof}

We remark that the above construction remains tight-cycle-free, if the range of the index $i$ is changed from $1\leq i\leq (n-1)/2$ to any integer set of size at least three.

\section{Proof of Theorem \ref{thm_main}} \label{section_proof}
In this section, we prove our main theorem, showing that for every integer $k \ge 3$, there exist tight-cycle-free $k$-uniform hypergraphs having $c$-fractional more edges than the full-$k$-star for some constant $c>0$.

\subsection{The key lemma}
The following lemma will be critical for the proof of Theorem \ref{thm_main}.
Loosely speaking, it can be used to reduce Theorem \ref{thm_main} to the problem of just finding any tight-cycle-free $k$-uniform hypergraphs with more edges than the full-$k$-star.

The {\it $t$-shadow} of a hypergraph $H$, denoted by $\partial_t(H)$, is defined as the following:
$$\partial_t(H)=\{S: |S|=t, S \subset e~\textrm{for some }e \in E(H)\}.$$
If $H$ consists of a single edge $e$, then we will write it as $\partial_t(e)$.

\begin{lemma}\label{lemma_shadow}
Let $H$ be a full-$k$-star with the vertex set $\{0\}\cup [n]$ and the center $0$.
Let $G_1, \cdots, G_t$ be subhypergraphs of $H$, and $F_1, \cdots, F_t$ be $k$-uniform hypergraphs on $[n]$.
Suppose they satisfy the following properties:
\begin{itemize}
\item [(i)] The hypergraph $H_i:=(H\setminus G_i) \cup F_i$ is tight-cycle-free for all $i =1, \cdots, t$, and
\item [(ii)] $\partial_{k-1}(F_i) \cap \partial_{k-1}(F_j)  = \emptyset$ for all $1\leq i< j\leq t$.
\end{itemize}
Then
$$H':=\left(H\setminus \bigcup_{i=1}^t G_i\right) \cup \left(\bigcup_{i=1}^t F_i\right)$$
is also tight-cycle-free.
\end{lemma}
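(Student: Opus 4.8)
The plan is to argue by contradiction: assume $H'$ contains a tight cycle $C = TC_\ell^k$ with distinct vertices $w_0, w_1, \ldots, w_{\ell-1}$ in cyclic order, so that its edges are the consecutive windows $e_m = \{w_m, \ldots, w_{m+k-1}\}$ (indices mod $\ell$). Every edge of $H'$ is either an \emph{$H$-edge} (lying in $H \setminus \bigcup_j G_j$, hence containing the center $0$) or an \emph{$F_i$-edge} (lying in some $F_i$, hence contained in $[n]$ and avoiding $0$). The whole strategy is to show that the $F$-edges appearing in $C$ must all come from a single hypergraph $F_i$; once this is established, every edge of $C$ lies in $(H \setminus \bigcup_j G_j) \cup F_i \subseteq H_i$ (using $H \setminus \bigcup_j G_j \subseteq H \setminus G_i$), so $C$ is a tight cycle in $H_i$, contradicting hypothesis (i). If $C$ uses no $F$-edge at all, then $C \subseteq H \setminus \bigcup_j G_j \subseteq H_i$ for any $i$.

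To pin down the $F$-edges, I would first record two local observations about consecutive edges of $C$, which always intersect in a $(k-1)$-set. First, if $e_m$ and $e_{m+1}$ are $F_i$- and $F_j$-edges respectively, then their common $(k-1)$-subset lies in $\partial_{k-1}(F_i) \cap \partial_{k-1}(F_j)$, so hypothesis (ii) forces $i = j$. Thus the $F$-edges of $C$ occur in maximal \emph{blocks} of a common index, and two $F$-edges of different indices are never adjacent; in particular, if $C$ uses more than one index, the blocks must be separated by $H$-edges. Second, consider a boundary where an $H$-edge $e_{a-1}$ is immediately followed by an $F$-edge $e_a$ (an entry into a block). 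Their intersection $\{w_a, \ldots, w_{a+k-2}\}$ is a subset of the $F$-edge $e_a$, hence avoids $0$; but $e_{a-1}$ does contain $0$, and $e_{a-1}$ has exactly one vertex outside this intersection, namely $w_{a-1}$. Therefore $w_{a-1} = 0$.

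Combining these, suppose for contradiction that $C$ contains $F$-edges of at least two distinct indices. Then $C$ has at least two maximal $F$-blocks, and by the first observation together with maximality, each such block is immediately preceded by an $H$-edge (the preceding edge can be neither a same-index $F$-edge, by maximality, nor a different-index one, by non-adjacency). The second observation then shows that the vertex lying just before each block equals the center $0$. Since distinct blocks begin at distinct positions around the cycle, this would place $0$ at two distinct positions $w_{a_1 - 1}$ and $w_{a_2 - 1}$ — impossible, since the vertices of a tight cycle are distinct. Hence all $F$-edges of $C$ share a single index, completing the reduction to hypothesis (i).

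I expect the point needing the most care to be the bookkeeping that every $F$-block is genuinely flanked by $H$-edges, so that the ``entry forces $0$'' observation applies, and the careful verification of the two local observations. The conceptual crux is recognizing that the disjoint-shadow condition (ii) forbids switching between different $F_i$'s inside a tight cycle, while any return to the full-star part forces a reappearance of the \emph{unique} center vertex $0$; these two facts together cap the number of blocks at one, which is exactly what lets us fall back on condition (i).
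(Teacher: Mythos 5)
Your proof is correct and follows essentially the same route as the paper's: condition (ii) prevents consecutive edges of the cycle from lying in different $F_i$'s, and the fact that every edge of $H$ contains the unique center $0$ forces all $F$-edges into a single block, reducing everything to a single $H_i$ and hypothesis (i). The only cosmetic difference is that the paper splits on whether $0\in V(C)$ while you count maximal $F$-blocks, but the underlying observations are identical.
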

\begin{proof}
We prove by contradiction. Suppose that $H'$ contains a tight cycle $C=v_1v_2 \cdots v_{\ell}$. If $0\not \in V(C)$, then all the edges of $C$ must come from $F_1\cup \cdots\cup F_t$. Note that the $(k-1)$-shadows of distinct $F_i$'s are disjoint, therefore all the edges of $C$ must come from the same $F_i$ (as, otherwise, there exist two consecutive edges of $C$ from distinct $F_i$'s which would share $k-1$ common vertices). However, from (i), each $H_i$ and in particular each $F_i$ is tight-cycle-free, which is impossible.

Now suppose $0\in V(C)$. Using similar arguments, we can show that the $l-k$ edges of $C$ not containing $0$ are from the same $F_i$. From the definition of $H'$, we see that the $k$ edges of $C$ containing $0$ must be in $H_i$. Therefore $C$ is completely contained in $H_i$, again contradicting that $H_i$ has no tight cycle.
\end{proof}

\subsection{The 3-uniform case}
We start with the case $k=3$. We will use the previous lemma to prove the following theorem.

\begin{theorem}\label{thm_3} For sufficiently large $n$,
$$f_3(n) \ge \left(1+\frac{1}{5}\right) \binom{n-1}{2}+O(n).$$
\end{theorem}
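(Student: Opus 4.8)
The plan is to apply the key lemma (Lemma~\ref{lemma_shadow}) to boost the linear-order improvement of Proposition~\ref{prop_warmup} up to a constant fraction of $\binom{n-1}{2}$. The warm-up construction shows how to remove $(n-1)/2$ edges through the center $0$ and replace them by $n-1$ edges inside $[n]$ while staying tight-cycle-free; moreover the closing remark after the proposition tells us that the same gadget works for \emph{any} index set of size at least three, not just a consecutive run covering all of $[n]$. The idea is therefore to partition (most of) the vertex set $[n]$ into many disjoint blocks, each of size a small constant, and on each block run an independent copy of the warm-up gadget. Each such copy is a pair $(G_i, F_i)$ where $G_i$ consists of the star-edges through $0$ that we delete and $F_i$ consists of the doubled-up triples we add inside the block, and each modified hypergraph $H_i = (H\setminus G_i)\cup F_i$ is tight-cycle-free by (essentially) the argument of Proposition~\ref{prop_warmup}.

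The crucial point is hypothesis (ii) of the lemma: the $(k-1)$-shadows (here $2$-shadows) of the $F_i$ must be pairwise disjoint. Since I would place the different gadgets on disjoint blocks of vertices, and every added triple lies entirely within a single block, its pairs also lie within that block; hence pairs coming from different blocks are automatically distinct. With (i) and (ii) verified, Lemma~\ref{lemma_shadow} guarantees that the simultaneous modification $H' = (H\setminus\bigcup G_i)\cup(\bigcup F_i)$ is tight-cycle-free. First I would fix the right block size to optimize the edge count; since each block of size $m$ contributes a net gain of about $m/2$ edges (deleting $m/2$ star-edges, adding $m$ block-edges) over roughly $n/m$ blocks, the total net gain is on the order of $n/2 \cdot (\text{gain per vertex})$, and I would need to choose the gadget so that this gain is a constant fraction of $\binom{n-1}{2}\sim n^2/2$. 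That forces the gadget to add \emph{on the order of the block's own shadow} rather than $O(m)$ edges.

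This is where the main obstacle lies, and it explains the targeted constant $1/5$. A single linear gadget as in the warm-up gains only $\Theta(n)$ in total, which is negligible against $\binom{n-1}{2}$; to gain a constant fraction I must design, within each constant-size block $B$, a tight-cycle-free replacement that deletes $c_1\binom{|B|}{2}$ star-pairs worth of edges through $0$ but adds $c_2\binom{|B|}{2}$ \emph{internal} edges with $c_2 > c_1$, so that summed over $\Theta(n/|B|)$ blocks the internal edges already number a constant fraction of $\binom{n}{2}$. The delicate verification is that each local gadget $H_i$ remains tight-cycle-free: I must rule out tight cycles that alternate between the dense block of internal triples and the remaining star-edges through $0$, which requires a careful case analysis of how a tight cycle can enter and leave the block, exactly as in the proof of Proposition~\ref{prop_warmup} but now with a larger, denser gadget. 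Once a single gadget achieving the ratio corresponding to $1+\tfrac15$ is found and checked, the lemma assembles the global construction and the $O(n)$ error term absorbs boundary vertices not covered by any block.
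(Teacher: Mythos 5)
Your overall framework --- run many copies of a constant-size gadget and glue them together with Lemma~\ref{lemma_shadow} --- is the same as the paper's, but the way you propose to place the copies contains a fatal arithmetic gap. If the gadgets live on \emph{vertex-disjoint} blocks of constant size $|B|$, then there are only $\Theta(n/|B|)$ of them, and each one adds at most $\binom{|B|}{3}=O(1)$ internal triples while removing $O(1)$ star edges; the total net gain is therefore $\Theta(n/|B|)\cdot O(1)=O(n)$, no matter how dense you make the gadget inside its block. Your claim that ``summed over $\Theta(n/|B|)$ blocks the internal edges already number a constant fraction of $\binom{n}{2}$'' is false: $\Theta(n/|B|)\cdot c_2\binom{|B|}{2}=\Theta\left(c_2\,|B|\,n\right)=O(n)$ for constant $|B|$. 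To reach $\Theta(n^2)$ with disjoint blocks you would be forced to take $|B|=\Theta(n)$, i.e.\ a bounded number of linear-size blocks, at which point the required ``gadget'' is itself a dense tight-cycle-free hypergraph beating the restricted star by a constant factor --- which is essentially the theorem you are trying to prove, so the argument becomes circular.

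The idea you are missing is that condition (ii) of Lemma~\ref{lemma_shadow} only requires the $2$-shadows $\partial_2(F_i)$ to be pairwise disjoint; the vertex sets of the $F_i$ may (and must) overlap heavily. The paper takes a single $6$-vertex gadget $F=\{123,124,345,346,561,562\}$ with accompanying set $G=\{012,034,056\}$, so each copy has net gain $6-3=3$ edges and shadow $\partial_2(F)=K_6$ occupying $15$ pairs. Wilson's theorem then decomposes $K_{n-1}$ into edge-disjoint copies of $K_6$, producing $t=\binom{n-1}{2}/15=\Theta(n^2)$ copies of the gadget --- each vertex of $[n-1]$ lies in $\Theta(n)$ of them --- for a total gain of $3t=\frac{1}{5}\binom{n-1}{2}$. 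Tight-cycle-freeness of each individual $H_i$ comes for free from the remark after Proposition~\ref{prop_warmup} (the warm-up gadget works for any index set of size at least three), so no new case analysis of a ``larger, denser gadget'' is needed.
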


\begin{proof}
It suffices to show that if $n$ is sufficiently large and $n-7$ is divisible by $30$,
then $f_3(n)\geq (1+1/5)\binom{n-1}{2}$.
Consider the full-$3$-star with the vertex set $\{0,1,\cdots,n-1\}$ and the center $0$.
Let $F=\{123,124,345,346,561,562\}$ and $G=\{012,034,056\}$.
We call $G$ as the {\it accompanying set} of $F$.

By the remark in the end of Section 2, the $3$-uniform hypergraph obtained from the full-star by adding edges in $F$ and deleting edges in $G$ is tight-cycle-free.
This together with Lemma \ref{lemma_shadow} show that if one can find many copies of $F$ (say $F_1,\cdots, F_t$) on the vertex set $\{1,\cdots,n-1\}$ whose 2-shadows are pairwise disjoint,
then the hypergraph obtained from the full-$3$-star by adding $F_1\cup \cdots \cup F_t$ and deleting their accompanying sets is also tight-cycle-free.
And $e(H)=\binom{n-1}{2}+3t$.

Let $L$ be the 2-shadow of $F$, which is a 5-regular graph on 6 vertices, i.e. $K_6$. To find such $F$-copies, it is enough to find many edge-disjoint $L$-copies in the complete graph $K_{n-1}$ on $\{1,\cdots,n-1\}$.
By a famous theorem of Wilson \cite{W1,W2,W3,W4},
if $n$ is sufficiently large and satisfies $6|(n-1)$ and $5|(n-2)$,
then one can partition the edge set of $K_{n-1}$ into edge-disjoint copies of $L$.
Since $n-7$ is divisible by $30$, indeed we can apply this result to get
$$t=\frac{1}{e(L)}\binom{n-1}{2}=\frac{1}{15}\binom{n-1}{2}$$
copies of sub-hypergraphs isomorphic to $F$, whose 2-shadows are pairwise disjoint.
Therefore, by Lemma \ref{lemma_shadow} we can find a tight-cycle-free 3-graph $H$ with
$$e(H)=\binom{n-1}{2}+3t=(1+\frac15)\binom{n-1}{2},$$
completing the proof of Theorem \ref{thm_3}.
\end{proof}

It is not clear how to generalize the cyclic-type construction in Section \ref{section_warmup} to $k$-uniform hypergraphs for $k\geq 4$. For that, we will show a different construction beating the conjectured $\binom{n-1}{k-1}$ bound for the general case in the next subsection; then applying Lemma \ref{lemma_shadow} again would imply a construction with $(1+c) \binom{n-1}{k-1}$ edges for some $c>0$.

\subsection{The general case: $k$-uniform}


\begin{lemma}\label{lem_kstar1}
Let $H$ be a full-$k$-star with center $0$.
If we
\begin{itemize}
\item add new edges $e_1,\cdots, e_t\subseteq V(H)\setminus \{0\}$ such that $\partial_{k-1}(e_i)\cap \partial_{k-1}(e_j)=\emptyset$ for all $i\neq j$, and
\item delete edges $\{0\}\cup f_i$ for all $1\leq i\leq t$, where $f_i$ is any $(k-1)$-subset of $e_i$,
\end{itemize}
then the resulting $k$-uniform hypergraph $H'$ remains tight-cycle-free and has the same number of edges with the full-$k$-star.
\end{lemma}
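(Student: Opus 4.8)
The plan is to deduce this from Lemma~\ref{lemma_shadow} by treating each added edge together with its deleted companion as a single elementary modification. Concretely, I would set $F_i=\{e_i\}$, the $k$-uniform hypergraph on $V(H)\setminus\{0\}$ consisting of the single edge $e_i$, and $G_i=\{\{0\}\cup f_i\}$, the subhypergraph of $H$ consisting of the single edge $\{0\}\cup f_i$ (which indeed belongs to the full-$k$-star, since it contains the center). With these choices, $H'$ coincides with the hypergraph $\left(H\setminus\bigcup_i G_i\right)\cup\left(\bigcup_i F_i\right)$ of Lemma~\ref{lemma_shadow}, and hypothesis~(ii) there reads $\partial_{k-1}(e_i)\cap\partial_{k-1}(e_j)=\emptyset$, which is exactly our assumption. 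The edge count is immediate: we delete $t$ edges and add $t$ edges, so $e(H')=e(H)=\binom{n-1}{k-1}$. Thus everything reduces to verifying hypothesis~(i): that each $H_i:=(H\setminus G_i)\cup F_i$ is tight-cycle-free.

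The heart of the argument is this single-modification claim, which I would prove directly. Fix $i$ and write $e=e_i$, $f=f_i$, so that $H_i$ is obtained from the full-star by deleting $\{0\}\cup f$ and adding the edge $e\supseteq f$, which avoids $0$. The key observation is that $e$ is the \emph{only} edge of $H_i$ not containing the center $0$. Suppose toward a contradiction that $H_i$ contains a tight cycle $C$ of length $\ell\ge k+1$. If $0\notin V(C)$, then every edge of $C$ avoids $0$; but a tight cycle has $\ell\ge k+1$ distinct edges while $H_i$ has only one edge avoiding $0$, a contradiction.

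The remaining case is $0\in V(C)$. Here I would use the structural fact that in a tight $k$-uniform cycle each vertex lies in exactly $k$ of the edges, so precisely $\ell-k$ edges of $C$ avoid the center. Since $e$ is the unique center-free edge of $H_i$ and the edges of a tight cycle are distinct, we must have $\ell-k\le 1$; combined with $\ell\ge k+1$ this forces $\ell=k+1$. But a tight $k$-uniform cycle of length $k+1$ is exactly the complete $k$-uniform hypergraph on its $k+1$ vertices, since every $k$-subset of a $(k+1)$-set is a consecutive $k$-tuple. Writing its vertex set as $\{0\}\cup T$ with $|T|=k$, the unique center-free edge of $C$ is $T$ itself, so $T=e$. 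Consequently all $k$-subsets of $\{0\}\cup e$ are edges of $H_i$; in particular $\{0\}\cup f$, where $f$ is a $(k-1)$-subset of $e$, would be an edge of $H_i$. This contradicts the fact that $\{0\}\cup f$ was deleted. Hence no such cycle exists, which establishes hypothesis~(i) and completes the reduction.

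I expect the only genuine (if modest) obstacle to be handling the case $0\in V(C)$ cleanly: one must invoke the degree structure of tight cycles to bound the number of center-free edges, and then recognize the length-$(k+1)$ tight cycle as the complete hypergraph $K_{k+1}^{(k)}$ in order to force the reappearance of the deleted edge $\{0\}\cup f$. Everything else—the edge count and the appeal to Lemma~\ref{lemma_shadow}—is routine bookkeeping.
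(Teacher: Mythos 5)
Your proof is correct, but it is organized differently from the paper's. The paper proves Lemma~\ref{lem_kstar1} directly, without invoking Lemma~\ref{lemma_shadow}: it takes a putative tight cycle $C$ in $H'$, notes it must use some $e_1$, and uses the pairwise disjointness of the shadows $\partial_{k-1}(e_i)$ to argue that both cycle-neighbours of $e_1$ must be star edges, which pins down $V(C)=\{0\}\cup e_1$ and forces $C$ to be the complete $k$-graph on $k+1$ vertices --- killed by the deleted edge $\{0\}\cup f_1$. You instead factor the statement through Lemma~\ref{lemma_shadow} with $F_i=\{e_i\}$ and $G_i=\{\{0\}\cup f_i\}$ (all hypotheses of that lemma are indeed satisfied as you check), which reduces everything to the $t=1$ case; there you localize the cycle not via shadow-disjointness but via the observation that $e_i$ is the unique centre-free edge of $H_i$ together with the degree count that every vertex of a tight $\ell$-cycle lies in exactly $k$ edges, forcing $\ell=k+1$. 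The endgame (a tight $(k+1)$-cycle is $K_{k+1}^{(k)}$, contradicting the deletion of $\{0\}\cup f_i$) is identical in both arguments. Your modularization is arguably cleaner --- it confines the use of shadow-disjointness entirely to Lemma~\ref{lemma_shadow} and makes the single-modification case transparent --- at the cost of being slightly less self-contained; the paper's direct version avoids any reliance on the earlier lemma. One pedantic point you glossed over, which does no harm: the $t$ deleted edges $\{0\}\cup f_i$ are pairwise distinct because $f_i\in\partial_{k-1}(e_i)$ and these shadows are disjoint, which is what makes the edge count exact.
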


\begin{proof}
Suppose for contradiction that $H'$ contains a tight cycle $C$. Then $C$ has to contain a new edge, say $e_1=\{1,\cdots,k\}$.
Since the $(k-1)$-shadows of $e_i$'s are disjoint, it is not hard to see that the only possibility to have a tight cycle $C$ is that
$V(C)=\{0,1,\cdots,k\}$. This also means that $C$ is a complete $k$-uniform hypergraph on $k+1$ vertices.
But this is impossible as we have deleted an edge $\{0\}\cup f_1$ for some $(k-1)$-subset $f_1\subseteq e_1$.
This proves the lemma.
\end{proof}

\begin{lemma}\label{lem_kstar+1}
For every integer $k\geq 3$, there exists a positive integer $m=m(k)$ such that the following holds for any $n\geq m$.
Let $H$ be a full-$k$-star with the vertex set $\{0\}\cup [n]$ and the center $0$.
Then there exist two $k$-uniform hypergraphs $F$ and $G$ such that
\begin{itemize}
\item $V(F)$ is a subset of size at most $m$ in $[n]$,
\item $G$ is a subhypergraph of $H$,
\item $H':=(H\setminus G)\cup F$ is tight-cycle-free with $e(H)+1$ edges.
\end{itemize}
\end{lemma}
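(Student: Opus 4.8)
The plan is to reduce the general case to the $3$-uniform construction by a \emph{lifting} trick: I would carry a fixed set of $k-3$ extra vertices along on every edge of the warm-up gadget. Concretely, fix a $(k-3)$-subset $W\subseteq[n]$ disjoint from the six base vertices $\{1,\dots,6\}$, and set
\[
F=\{\,W\cup f: f\in\{123,124,345,346,561,562\}\,\},\qquad
G=\{\,W\cup g: g\in\{012,034,056\}\,\},
\]
where the triples are read on $\{0,1,\dots,6\}$ with center $0$. Then $V(F)=W\cup\{1,\dots,6\}$ has $k+3$ vertices, so $m(k)=k+3$ works; moreover $|F|=6$ and $|G|=3$, each edge of $F$ avoids $0$, and each edge of $G$ contains $0$, so $H'=(H\setminus G)\cup F$ has $e(H)+3$ edges. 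Once $H'$ is shown to be tight-cycle-free, I would enlarge $G$ by two further arbitrary edges of $H$ through $0$: deleting edges never creates a tight cycle, so the result stays tight-cycle-free and now has exactly $e(H)+1$ edges, as required. For $k=3$ the set $W$ is empty and $H'$ is precisely the hypergraph of Proposition~\ref{prop_warmup}, so there is nothing to prove.

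The heart of the matter is to show that the lifted $H'$ is tight-cycle-free, and here I would run a case analysis on a hypothetical tight cycle $C=w_0\cdots w_{\ell-1}$ according to whether $0\in V(C)$. If $0\notin V(C)$, then every edge of $C$ avoids $0$ and hence lies in $F$; but every edge of $F$ contains all of $W$, so each of the $k-3\ge1$ vertices of $W$ would lie in all $\ell\ge k+1$ windows of $C$, which is impossible since a vertex lies in at most $k$ consecutive windows. Thus $0=w_p$ for some $p$. Now the $\ell-k\ge1$ edges of $C$ avoiding $0$ again lie in $F$ and therefore contain $W$; these edges are exactly the windows missing position $p$, a block of $\ell-k$ consecutive windows, so the same ``few consecutive windows can share a fixed vertex'' count forces $\ell-k\le k$, and since the non-$W$ vertices of $C$ come from the six base vertices, in fact $\ell\le k+4$. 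The identical count confines all of $W$ to a short interval of the cyclic order.

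With $\ell$ and the position of $W$ pinned down, I would conclude exactly as in Proposition~\ref{prop_warmup}, with $W$ carried along. Each edge of $C$ avoiding $0$ has the form $W\cup f$ with $f\in\{123,124,345,346,561,562\}$, and two such base triples share two vertices only when they form one of the three ``couples'' $\{123,124\},\{345,346\},\{561,562\}$; this severely limits how center-free edges can succeed one another along $C$. The edges of $C$ through $0$ are slides of their neighbours, and the key point is that each base triple contains one of the distinguished pairs $12,34,56$, so one such slide equals a deleted edge $W\cup\{0\}\cup\{\text{pair}\}\in G$ and is therefore absent. Tracing the forced neighbours of a center-free edge such as $W\cup\{1,2,3\}$ then runs into an absent edge, precisely as the order $5,2,3,4,1$ does in the warm-up, giving the contradiction.

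The main obstacle I anticipate is the bookkeeping in the case $0\in V(C)$: making precise that the containment of $W$ in every center-free window forces $W$ to appear essentially as a single consecutive block, so that the cyclic pattern of $C$ genuinely projects onto a short tight cycle of the base warm-up hypergraph on $\{0,1,\dots,6\}$. Once that projection is established, the limited couple structure of $\{123,\dots,562\}$ together with the three deletions $012,034,056$ forbids the cycle by the argument of Proposition~\ref{prop_warmup}; verifying the projection for each of the few values $\ell\in\{k+1,\dots,k+4\}$ is routine but is exactly where the care is needed.
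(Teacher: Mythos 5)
There is a genuine gap, and in fact the lifted construction itself is false for $k\geq 4$. Take $k=4$, $W=\{7\}$, so that $F=\{\{7,1,2,3\},\{7,1,2,4\},\{7,3,4,5\},\{7,3,4,6\},\{7,5,6,1\},\{7,5,6,2\}\}$ and $G=\{\{7,0,1,2\},\{7,0,3,4\},\{7,0,5,6\}\}$. Then the cyclic vertex sequence $0,3,2,7,1,4$ is a tight $6$-cycle in $H'=(H\setminus G)\cup F$: its six windows are $\{0,2,3,7\}$, $\{1,2,3,7\}$, $\{1,2,4,7\}$, $\{0,1,4,7\}$, $\{0,1,3,4\}$, $\{0,2,3,4\}$; the second and third lie in $F$, and the other four contain $0$ and avoid $G$ (none of them is one of the three deleted edges, since e.g.\ $\{0,1,3,4\}$ does not contain $7$). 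The step that fails is your claim that a window of $C$ through $0$ which ``slides'' off a distinguished pair must equal a deleted edge $W\cup\{0\}\cup\{\text{pair}\}$: windows through $0$ need not contain $W$, because once $k\geq4$ the vertex set $W$ can sit in the middle of the cyclic order, several positions away from $0$. For the same reason the intended projection onto the $3$-uniform warm-up breaks down: deleting $W$ from the cyclic sequence $0,3,2,7,1,4$ yields $0,3,2,1,4$, whose window $\{0,3,4\}$ is deleted in the warm-up hypergraph, but the corresponding $4$-uniform window $\{0,1,3,4\}$ is not deleted in $H'$. So the case analysis cannot be ``carried along with $W$''; the interleaving of $W$ with the base vertices creates genuinely new cycles.

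For comparison, the paper's proof of this lemma does not lift the $3$-uniform gadget at all. It builds an intrinsically $k$-uniform configuration: a core edge $e_0=[k]$ together with the $k$ edges $e_i=([k]\setminus\{i\})\cup\{k+i\}$, plus $O(k^3)$ further corrector edges $e_{k+\alpha}$, each new edge being paired with one deleted star-edge $\{0\}\cup f_i$ so that the net gain is exactly one; the corrector edges are chosen precisely to kill the residual tight cycles of the form $k+i,\pi(1),\dots,\pi(k),k+j,0$ that survive the first round of deletions. If you want to salvage a lifting strategy you would need to delete \emph{every} star-edge $\{0\}\cup S$ with $S\subseteq V(F)\setminus\{0\}$ that could serve as a window of such an interleaved cycle, and then check that the count of deletions still leaves a surplus --- which is essentially what forces the paper into its more elaborate bookkeeping.
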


\begin{proof}
Let $e_0=[k]$. For all $1\leq i\leq k$, let $f_i:=[k]\setminus \{i\}$ be a $(k-1)$-subset and $e_i:=f_i\cup \{k+i\}$.
We will construct $F=\{e_0,e_1,\cdots, e_{k+M}\}$ for some positive $M=O(k^3)$ and
$$G=\{\{0\}\cup f_i: f_i\subseteq e_i \text{ and } |f_i|=k-1 \text{ for every } 1\leq i\leq k+M\}$$ such that
$\partial_{k-1}(e_i)\cap \partial_{k-1}(e_j)=\emptyset$ for all $1\leq i<j\leq k+M$. Let $H'=(H \setminus G) \cup F$. Then $e(H')=e(H)
-e(G)+e(F)=e(H)+1$.

Suppose we have such $F$ and $G$. By Lemma \ref{lem_kstar1}, $(H\setminus G)\cup (F\setminus \{e_0\})$ is tight-cycle-free.
So all potential tight cycles $C$ in $H'$ must contain the edge $e_0$. We will discuss all possibilities for such $C$ as follows.
First since the full $k$-star is tight-cycle-free, $C$ must contain at least one edge from $\{e_0, e_1, \cdots, e_{k+M}\}$.
If only $e_0$ appears in $C$, then $V(C) =\{0,1,\cdots,k\}$, however this is impossible since all the edges $\{0\}\cup f_i$ ($1\leq i\leq k$) are deleted in $H'$.
Recall that we would choose $e_i$'s so that their $(k-1)$-shadows are pairwise disjoint.
Therefore $C$ has at least one edge in $\{e_1,\cdots, e_k\}$; and it is also not hard to see that the tight cycle $C$ can use at most two edges in $\{e_1,\cdots, e_k\}$.

Consider the case that $C$ contains exactly one edge in $\{e_1,\cdots, e_k\}$, say $e_\ell$ for some $\ell\in [k]$.
Then the cycle $C$ must have consecutive vertices $ \pi(1),\pi(2),\cdots, \pi(k), k+\ell$, where $\pi:[k]\to[k]$ is a permutation, $\pi(1)=\ell$ and $e_\ell=\{\pi(2),\cdots, \pi(k),k+\ell\}$. These vertices alone do not form a tight cycle, otherwise it would violate that $\partial_{k-1}(e_i)\cap \partial_{k-1}(e_j)=\emptyset$ for all $1\leq i<j\leq k+M$. Consider the vertex immediate before $\pi(1)$. For the same reason, this vertex can only be $0$, however this is also not possible since for all $(k-1)$-subset $f_j \subset [k]$, $\{0\} \cup f_j \not\in E(H')$.

Now we may assume that $C$ contains two edges in $\{e_1,\cdots, e_k\}$, say $e_i$ and $e_j$ for some $1\leq i<j\leq k$.
By similar analysis as above, the only possible tight cycle $C$  that may appear in $H'$ is on the vertex set in the order $k+i,\pi(1),\cdots, \pi(k),k+j,0$ for some permutation $\pi$ of $[k]$ with $\pi(1)=j$ and $\pi(k)=i$.
For all $1\leq i<j\leq k$ and all sets $\{\pi(4), \cdots, \pi(k)\}$ (there are at most ${k \choose 3}$ such $(k-3)$-sets), we let
$$f_{k+\alpha}=\{\pi(4), \cdots, \pi(k), k+j, k+i\}$$ be all such $(k-1)$-subsets (say $M$ of them in total), where $M\leq 3\binom{k}3=O(k^3)$, and we may label them by $1\leq \alpha\leq M$.

By removing the edges $\{0\} \cup f_{k+\alpha}$, we destroy all the tight cycle mentioned above. One can find $k$-subsets $e_{k+1},\cdots, e_{k+M}$ with pairwise distinct $(k-1)$-shadows such that $f_{k+\alpha} \subset e_{k+\alpha}$ for all $\alpha=1, \cdots, M$. For example, let $e_{k+\alpha} = f_{k+\alpha} \cup \{2k+\alpha\}$.
It remains to check that these new $e_{k+\alpha}$'s have distinct $(k-1)$-shadows from $e_1, \cdots, e_k$. This follows from the observation that $2k+j \not \in e_l$ for $1 \le l \le k$, and at least one of $\{k+j, k+i\}$ is also not in $e_l$.
We conclude this proof by noting that the value of $m$ can be $O(k^3)$.
\end{proof}


We are ready to prove our main result -- Theorem \ref{thm_main}. It is a combination of the construction in Lemma \ref{lem_kstar+1} and some classic hypergraph packing results.

\begin{proof}[Proof of Theorem \ref{thm_main}.]
Let $n$ be sufficiently large and $H$ be the full-$k$-star with center $0$ on $\{0, \cdots, n-1\}$.
By Lemma \ref{lem_kstar+1}, there exists a $k$-uniform tight-cycle-free hypergraph $H'=(H\setminus G)\cup F$ with $\binom{n-1}{k-1}+1$ edges,
where $F$ is a $k$-uniform hypergraph on at most $m=O(k^3)$ vertices with $V(F)\subseteq [n-1]$ and $G$ is a subhypergraph of $H$.
Let us call $G$ the {\it accompanying} hypergraph of $F$.
By Lemma \ref{lemma_shadow}, if one can find copies of sub-hypergraphs on $[n-1]$ isomorphic to $F$, denoted by $F_1, \cdots, F_t$, such that $\partial_{k-1}(F_i)$'s are pairwise disjoint, then
$$H''=\left(H \setminus \bigcup_{i=1}^t G_i \right) \cup \left(\bigcup_{i=1}^t F_i\right)$$
is also tight-cycle-free, where $G_i$ is the accompanying hypergraph of $F_i$, and $e(H'') \ge e(H) + t$.

Note that $F$ is contained in a complete $k$-uniform hypergraph on $m=O(k^3)$ vertices. By the result of R\"odl \cite{rodl_nibble} on the Erd\H os-Hanani Conjecture \cite{erdos-hanani}, when $n$ is sufficiently large, one can pack at least $$t\geq \left(1-o(1)\right) \frac{\binom{n-1}{k-1}}{\binom{m}{k-1}}$$
copies of $F$ in $[n-1]$ so that their $(k-1)$-shadows are disjoint. Therefore in $H''$, the number of edges is equal to
\begin{align*}
e(H'')\geq e(H) + t\geq \binom{n-1}{k-1}+ \left(1-o(1)\right) \frac{\binom{n-1}{k-1}}{\binom{m}{k-1}}\geq (1+c) \binom{n-1}{k-1},
\end{align*}
for sufficiently large $n$. Here $c$ is a positive constant which only depends on $k$ (but not $n$).
\end{proof}

\section{Concluding remarks}
In this paper, we construct $k$-uniform $n$-uniform hypergraphs that are tight-cycle-free and contains more edges than the full-$k$-star, disproving a conjecture of S\'os and Verstra\"ete. 
Below are some observations and related problems.

\begin{list}{\labelitemi}{\leftmargin=1em}

\item
To better understand the asymptotic behavior of $f_k(n)$, it naturally leads to the question of determining the Tur\'an number $ex(n, TC_\ell^k)$,
i.e., the maximum number of edges in a $TC_\ell^k$-free $k$-uniform $n$-vertex hypergraph.
If $\ell$ is not divisible by $k$, $TC_\ell^k$ is not $k$-partite. Therefore the complete $k$-partite $k$-uniform hypergraph does not contain a copy of $TC_\ell^k$ and thus $ex(n, TC_\ell^k) = \Theta(n^k)$. If $\ell$ is divisible by $k$, it is likely that $ex(n, TC_\ell^k)=\Theta(n^{c_{k,\ell}})$ for some constant $c_{k,\ell} \in (k-1, k)$, and $c_{k, \ell} \rightarrow k-1$ when $\ell \rightarrow \infty$.
A problem of Conlon (see \cite{MPS}) asks for 3-uniform hypergraphs if there is a constant $c>0$ such that $ex(n,TC_\ell^3)=O(n^{2+c/\ell})$ for all $\ell$ which are divisible by $3$. If this answer is affirmative, then it implies that $f_3(n)=O(n^{2+o(1)})$. One can easily show $ex(n, TC_{ks}^k)=\Omega(n^{k-2} \cdot ex(n, C_{2s}))$. So the $C_4$-free projective plane construction immediately gives $ex(n, TC_6^3)=\Omega(n^{5/2})$, while the best upper bound is $ex(n, TC_6^3)=O(n^{11/4})$ by Erd\H os \cite{erdos_partite}.

\item Let $H$ be a 3-uniform hypergraph with vertex set $[n]$. One can define a 3-unform hypergraph $H^*$ with the vertex set being the $2$-shadow $\partial_2(H)$ of $H$.
And every edge $e=\{i, j, k\}\in H$ defines an edge $e^*=\{ij,ik,jk\}$ in $H^*$. It is easy to see that $H^*$ is a linear 3-graph with $e(H^*)=e(H)$ and $v(H^*)=O(n^2)$. We say a loose cycle $(e^*_1,e^*_2,\cdots, e^*_k)$ in $H^*$ is {\it feasible}, if $|A_i\cap A_{i+1}|=1$ and for other $i, j$, $|A_i \cap A_j|=0$,
where $A_i:=e^*_i\cap e^*_{i+1}$ is a vertex in $H^*$ and also a $2$-subset in $[n]$. It is not hard to show that $H$ is $TC^3_{\ell}$-free if and only if $H^*$ does not contain a feasible loose cycle of length $\ell$.
Let $G$ be an $n$-vertex linear $3$-uniform hypergraph.
It is known \cite{CGJ} that if $G$ has no loose cycles of length $2k$, then $e(G)=O(n^{1+1/k})$;
and it is also known that if $G$ has no loose cycles of any length, then $e(G)=O(n)$.
If these proofs could be adapted for feasible loose cycles as well, then it immediately gives $ex(n, TC^3_{6})=O((n^2)^{1+1/3})=O(n^{8/3})$ confirming a conjecture in \cite{verstraete_survey}, and also $f_3(n)=\Theta(n^2)$.

\item The following conjecture on tight paths was asked by Kalai, as an attempt to generalize the Erd\H os-S\'os Conjecture \cite{erdos-sos}:

\begin{conjecture}
	For any integer $k$ and $\ell$, suppose $H$ is a $k$-uniform $n$-vertex hypergraph not containing a tight $k$-uniform path of length $\ell$, then $$e(H) \le \frac{\ell-1}{k} \binom{n}{k-1}.$$
\end{conjecture}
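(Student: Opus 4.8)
The plan is to regard this as the hypergraph generalization of the Erd\H os--Gallai theorem on paths, which is exactly the $k=2$ instance of the conjecture (a tight $2$-uniform path of length $\ell$ is an ordinary path with $\ell$ edges, and the bound becomes $e(H)\le \frac{\ell-1}{2}n$). A convenient reformulation comes from codegrees: writing $d(S)$ for the number of edges containing a $(k-1)$-set $S$, we have $\sum_{|S|=k-1} d(S)=k\,e(H)$, so the asserted inequality is equivalent to
$$\sum_{|S|=k-1} d(S)\ \le\ (\ell-1)\binom{n}{k-1},$$
i.e. the average codegree of a $(k-1)$-set is at most $\ell-1$. I would attempt to prove this aggregate codegree bound directly, mirroring the longest-path proof of Erd\H os--Gallai.

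The local input is clean. Take a longest tight path $P=v_1v_2\cdots v_p$ in $H$; by hypothesis it has at most $\ell-1$ edges, so $p\le \ell+k-2$. Its terminal $(k-1)$-set $S^\ast=\{v_{p-k+2},\dots,v_p\}$ cannot be extended, since any edge $S^\ast\cup\{w\}$ with $w\notin V(P)$ would lengthen $P$; hence every edge through $S^\ast$ lies in $V(P)$ and $d(S^\ast)\le p-(k-1)\le \ell-1$. So one terminal $(k-1)$-set already meets the desired per-set bound. The first step of the plan is to turn this one good set into many, via a \emph{tight-path rotation}: generalize P\'osa's rotation so that rotating $P$ about suitable pivots yields a large family of maximum tight paths with distinct terminal $(k-1)$-sets, all confined to a bounded vertex set and hence all of codegree at most $\ell-1$. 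The second step is induction on $n$: after a component/block decomposition reduces to the ``tightly connected'' case, delete the vertices of the controlled end-structure (or a suitable low-codegree $(k-1)$-set together with one vertex), charge their codegree contribution against the budget $(\ell-1)\binom{n}{k-1}$, and apply the inductive bound to the remainder.

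I expect the rotation step to be the main obstacle, and indeed to be where the difficulty of this (still open) conjecture concentrates. In the graph case the end of a path is a single vertex, and maximality pins down its \emph{entire} neighborhood on the path; for a tight path the end is a $(k-1)$-set, so maximality controls only \emph{one} codegree, while the many edges incident to the end region---each overlapping its neighbor in $k-1$ vertices---remain uncontrolled, and the interlocking structure makes a clean exchange argument elusive. A further structural obstacle is that the target $\frac{\ell-1}{k}\binom{n}{k-1}$ has order $n^{k-1}$, whereas the graph extremal configuration (disjoint cliques) generalizes to disjoint copies of $K^{(k)}_{\ell+k-2}$, which carry only $O(n)$ edges; so for $k\ge 3$ and large $n$ the true extremal hypergraph must be something genuinely denser and is not understood, leaving no obvious configuration for the induction to aim at. For these reasons I would not expect the Erd\H os--Gallai template to transfer verbatim, and proving even the case $\ell=k+1$ (where the bound $\binom{n}{k-1}$ sits barely above the full-$k$-star) already appears to demand a new idea beyond longest-path extremality.
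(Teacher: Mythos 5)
There is a genuine gap here, and in fact there is nothing in the paper to compare your argument against: the statement is Kalai's conjecture, which the paper explicitly presents as \emph{open}. The authors only record partial progress by others, namely the bound $e(H)\le \frac{(\ell-1)(k-1)}{k}\binom{n}{k-1}$ of F\"uredi, Jiang, Kostochka, Mubayi and Verstra\"ete, and Patk\'os's bound $e(H)\le\sum_{j=2}^{\ell}\frac{j-1}{k-j+2}\binom{n}{k-1}$ for $k\ge 2\ell$. So no complete proof exists in the paper, and your proposal does not supply one either.

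To be concrete about where your argument stops being a proof: the reformulation as an average-codegree bound $\sum_{|S|=k-1}d(S)\le(\ell-1)\binom{n}{k-1}$ is correct, and the observation that the terminal $(k-1)$-set $S^\ast$ of a longest tight path satisfies $d(S^\ast)\le \ell-1$ is a sound (and standard) first step. But everything after that is a plan rather than an argument. The ``tight-path rotation'' that is supposed to convert one low-codegree $(k-1)$-set into a positive proportion of all $\binom{n}{k-1}$ of them is never constructed, and no analogue of P\'osa rotation is known that controls the codegrees of more than a bounded number of $(k-1)$-sets; likewise the proposed induction step (``delete a low-codegree $(k-1)$-set together with one vertex and charge against the budget'') is not specified in a way that the budget $(\ell-1)\binom{n}{k-1}$ actually closes, since removing one vertex destroys $\binom{n-1}{k-2}$ many $(k-1)$-sets whose codegrees are not controlled by the single bound $d(S^\ast)\le\ell-1$. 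You correctly diagnose this yourself in the final paragraph. As a factual check on your closing remark: even the case $\ell=k+1$ is, to my knowledge, unresolved in the form stated, so your assessment that a new idea is required is accurate --- but that also means the proposal should be read as a (reasonable) research program, not a proof of the statement.
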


Recently, F\"uredi et al. \cite{fjkmv} prove an upper bound upper bound $e(H) \le \frac{(\ell-1)(k-1)}{k} \binom{n}{k-1}$ for this problem, improving the trivial upper bound $e(H) \le (\ell-1) \binom{n}{k-1}$. An earlier result of Patk\'os \cite{patkos} gives an upper bound $e(H) \le \sum_{j=2}^{\ell} \frac{j-1}{k-j+2}\binom{n}{k-1}=O_{\ell}(\frac{1}{k}\binom{n}{k-1})$ for $k \ge 2\ell$, which is better than the previous bound if $k$ is somewhat larger than $\ell$. It would be very interesting to further improve bounds for Kalai's Conjecture. As a side note, in \cite{GKL}, Gy\H{o}ri et al. proved that any $k$-uniform $n$-vertex hypergraph with more than $(\alpha n-k)\binom{n}{k-1}$ edges contains a tight path on $\alpha n$ vertices.

\end{list}
\medskip
\noindent \textbf{Acknowledgement. }The authors would like to thank Tao Jiang and Jacques Verstra\"ete for reading the manuscript and providing valuable comments, and Bal\'azs Patk\'os for bringing the reference \cite{patkos} to our attention.

\end{document}